\theoremstyle{plain}
\newtheorem{theorem}{\bf Theorem}
\newtheorem{lemma}[theorem]{Lemma}
\theoremstyle{definition}
\newtheorem{remark}[theorem]{Remark}
\newtheorem{definition}[theorem]{Definition}
\newcommand{\reg}{\operatorname{reg} }
\newcommand{\In}{\operatorname{In} }
\newcommand{\F}{\mathcal{F}}
\newcommand{\D}{\Delta}
\newcommand{\lk}{\operatorname{lk}}
\newcommand{\gorstar}{Gorenstein$^{*}$\xspace}
\newcommand{\excise}[1]{}
\begin{document}

\title{Linear syzygies, flag complexes, and regularity}

\author[A.~Constantinescu]{Alexandru Constantinescu} \address{Mathematisches Institut, Freie Universität
  Berlin, Arnimallee 3, 14195 Berlin, Germany}
\email{aconstant@math.fu-berlin.de}
\urladdr{\href{http://userpage.fu-berlin.de/aconstant/Main.html}{http://userpage.fu-berlin.de/aconstant/Main.html}}

\author[T.~Kahle]{Thomas Kahle} \address {Fakultät für Mathematik,
  OvGU Magdeburg, Universitätsplatz 2, 39106 Magdeburg, Germany}
\email{thomas.kahle@ovgu.de}
\urladdr{\href{http://www.thomas-kahle.de}{http://www.thomas-kahle.de}}

\author[M.~Varbaro]{Matteo Varbaro}
\address{ Dipartimento di Matematica,
 Universit\`a di Genova,
 Via Dodecaneso 35, Genova 16146, Italy}
\email{varbaro@dima.unige.it}
\urladdr{\href{http://www.dima.unige.it/~varbaro/}{http://www.dima.unige.it/~varbaro/}} 

\subjclass[2010]{Primary: 13F55; Secondary: 13D02, 20F55}


\date{October 2014} 

\begin{abstract}
  We show that for every $r\in\mathbb{Z}_{>0}$ there exist monomial
  ideals generated in degree two, with linear syzygies, and regularity
  of the quotient equal to~$r$.  Such examples can not be found among
  Gorenstein ideals since the regularity of their quotients is at most
  four.  We also show that for most monomial ideals generated in
  degree two and with linear syzygies the regularity is
  $O(\log(\log(n))$, where $n$ is the number of variables.
\end{abstract}

\maketitle

Let $n$ be a positive integer, $S=K[x_1,\ldots ,x_n]$ the polynomial
ring in $n$ variables over a field $K$.
Any quotient $S/I$ by some homogeneous ideal $I\subseteq S$ has a
minimal graded free resolution.  The number of minimal generators of a
given degree of the free modules occurring in the resolution are
independent of the resolution chosen and define the \emph{Betti
  numbers} $\beta_{i,j}(S/I)$.  The \emph{Castelnuovo-Mumford
  regularity} of $S/I$ is $\reg(S/I):=\max\{j-i:\beta_{i,j}\neq 0\}$.

\subsection*{Bounding the regularity}
Equipped with these definitions it is a basic question to understand
extreme values and shapes of the Betti numbers and the modules that
realize them.  One line of research, which we contribute to here, is
to bound the regularity in terms of the number of variables for
specified classes of ideals.  To get interesting bounds one has to put
strong restrictions.  A class of examples due to Mayr and Meyer shows
that even for quadratically generated binomial ideals in $n$
variables, regularity of the order of $2^{2^{n}}$ is
possible~\cite{mayr82:_compl_word_probl_commut_semig_polyn_ideal,bayer1988complexity}.
In view of this result, interest shifted to specific classes of ideals
with good geometric or algebraic properties.  For example, Eisenbud
and Goto conjectured in~\cite{eisenbud1984linear} that if $I$ is a
prime ideal defining a variety of codimension $r$ and degree $d$ in
$\mathbb{P}^{n-1}$, then $\reg(S/I) \leq d-r$.  See
\cite{bayer93:_what} for a broader overview. In other directions, the
defining ideals of Koszul algebras have a good bound: If $S/I$ is
Koszul, then $\reg(S/I)\leq n$. The same bound is also satisfied by
monomial ideals generated in degree two.  In order to get stricter
bounds, one has to impose more restrictions on the class of ideals.
In this note we are interested in quadratically generated ideals
whose resolutions are linear for a few steps:

\begin{definition}
  For any positive integer $p$, the $K$-algebra $S/I$ \emph{satisfies
    property~$N_p$} if:
  \[
  \beta_{i,j}(S/I)=0 \quad \forall \ i\in\{1,\ldots ,p\} \mbox{ and }j\neq
  i+1.
  \]
\end{definition}

If $S/I$ is Koszul and satisfies property~$N_p$, a recent result of
Avramov, Conca, and
Iyengar~\cite[Theorem~6.1]{avramov2013subadditivity} implies that
\[
\reg(S/I)\leq 2\lfloor n/(p+1) \rfloor + 1.
\]
It is currently unknown whether the above bound is sharp.  However,
ideals for which $S/I$ is Koszul, that satisfies property~$N_2$, and
have $\reg(S/I)\sim \sqrt{n}$
exist~\cite[Example~6.9]{avramov2013subadditivity}.
In contrast to the belief that ideals defining Koszul rings and
quadratic monomial ideals should have similar homological properties
(they often do), if $I$ is a monomial ideal such that $S/I$
satisfies~$N_p$, a much better bound has been established by Dao,
Huneke, and Schweig.

\begin{theorem}\label{thm:DHS}\cite{dao2013bounds}
  Let $I\subseteq S=K[x_1,\ldots ,x_n]$ be a monomial ideal such that
  $S/I$ satisfies $N_p$ for some $p\geq 2$. Then
  \[
  \reg(S/I)< \log_{(p+3)/2}\bigg(\frac{2n}{p}\bigg)+2.
  \]
\end{theorem}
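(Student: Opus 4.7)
Since polarization preserves both property $N_p$ and the regularity of the quotient, we may assume $I$ is squarefree. Then $I=I(G)$ is the edge ideal of a graph $G$ on $[n]$, and $S/I$ is the Stanley--Reisner ring of the independence complex $\Delta=\mathrm{Ind}(G)$. By Hochster's formula
\[
\beta_{i,i+j}(S/I)=\sum_{\substack{W\subseteq[n]\\ |W|=i+j}}\dim_K \tilde H_{j-1}(\Delta_W;K),
\]
property $N_p$ is equivalent to the vanishing $\tilde H_k(\Delta_W;K)=0$ for every $k\geq 1$ and every $W\subseteq[n]$ with $|W|\in\{k+2,\ldots,k+p+1\}$. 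In particular, every induced subcomplex on at most $p+2$ vertices is acyclic in positive dimensions, and since Hochster's formula involves only induced subcomplexes, the $N_p$ property descends to every induced subgraph of $G$.

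The main inductive step uses the vertex-deletion recursion. For any $v\in V(G)$, the short exact sequence
\[
0\to S/(I:x_v)(-1)\xrightarrow{\,\cdot x_v\,} S/I\to S/(I,x_v)\to 0,
\]
together with the standard identifications $S/(I,x_v)\leftrightarrow S/I(G\setminus v)$ and $S/(I:x_v)\leftrightarrow S/I(G\setminus N_G[v])$ (up to adjoining inert polynomial variables), yields
\[
\reg(S/I(G))\leq\max\bigl\{\reg(S/I(G\setminus v)),\ \reg(S/I(G\setminus N_G[v]))+1\bigr\}.
\]
Both $G\setminus v$ and $G\setminus N_G[v]$ are induced subgraphs and therefore still satisfy $N_p$. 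I would then proceed by induction on $n$, choosing $v$ at each step with $|N_G[v]|>\tfrac{p+1}{p+3}\,n$, so that $|V(G\setminus N_G[v])|<\tfrac{2n}{p+3}$. The inductive hypothesis then gives $\reg(S/I(G\setminus v))<\log_{(p+3)/2}(2(n-1)/p)+2$ and $\reg(S/I(G\setminus N_G[v]))+1<\log_{(p+3)/2}(2n/p)+2$ (the extra $+1$ is absorbed since $\log_{(p+3)/2}\bigl((p+3)/2\bigr)=1$), producing the desired strict bound. The base case $n\leq p$ follows from the observation that $N_p$ together with $n\leq p$ forces the entire resolution of $S/I$ to be linear, so $\reg(S/I)\leq 1$.

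The crux, and the main obstacle, is the combinatorial lemma producing a vertex $v$ with $|N_G[v]|>\tfrac{p+1}{p+3}\,n$ in every graph $G$ on more than $p$ vertices satisfying $N_p$. I would attack this by contradiction: assuming every closed neighborhood is smaller, a double-counting argument on non-adjacent pairs---organized by size of candidate induced subsets---would locate a $W\subseteq[n]$ with $|W|\leq p+2$ for which $\Delta_W$ carries a nonzero class in $\tilde H_k(\Delta_W;K)$ for some $k\geq 1$, contradicting the vanishing recorded above. The remaining ingredients (polarization, the exact sequence, and the logarithmic bookkeeping) are routine; the delicate combinatorial work lies entirely in selecting the dominant vertex.
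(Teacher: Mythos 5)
First, a point of context: the paper does not prove this statement at all---it is quoted verbatim from Dao, Huneke, and Schweig \cite{dao2013bounds}---so there is no internal proof to compare against. Judged on its own terms, your skeleton is reasonable and close in spirit to the actual argument: polarization, the Hochster-formula translation of $N_p$ (your index bookkeeping $|W|\in\{k+2,\ldots,k+p+1\}$ is correct), the heredity of $N_p$ under passing to induced subgraphs, the recursion $\reg(S/I(G))\leq\max\{\reg(S/I(G\setminus v)),\,\reg(S/I(G\setminus N_G[v]))+1\}$, and the logarithmic bookkeeping (the $+1$ is indeed absorbed exactly when the deleted neighborhood has more than $\frac{p+1}{p+3}n$ vertices) are all standard and sound.

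The problem is the step you yourself flag as the crux: the claim that every graph $G$ on $n>p$ vertices satisfying $N_p$ has a vertex $v$ with $|N_G[v]|>\frac{p+1}{p+3}\,n$. This is not merely unproved in your sketch---it is false. Take $G=K_{s,s}$: its independence complex is the disjoint union of two simplices, which has no induced cycles whatsoever (equivalently, the complement of $G$ is chordal, so by Fr\"oberg's theorem $I(G)$ has a linear resolution), hence $S/I(K_{s,s})$ satisfies $N_p$ for every $p$. Yet every closed neighborhood has exactly $s+1$ vertices out of $n=2s$, and $s+1>\frac{p+1}{p+3}\cdot 2s$ forces $s<\frac{p+3}{p-1}$; so already $K_{6,6}$ refutes the lemma for $p=2$ and $K_{3,3}$ refutes it for $p=3$ (and $C_5$ shows that even where the inequality nearly holds it can degenerate to equality, which breaks your strict bookkeeping). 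In these examples the regularity is $1$ and the theorem holds trivially, but that only shows a dominating-vertex argument cannot be the mechanism of proof: no double-counting argument will produce a vertex that does not exist. The actual proof in \cite{dao2013bounds} runs the same recursion but replaces the dominating-vertex claim with a finer structural analysis of how the closed neighborhoods $N_G[y]$, for $y$ ranging over the neighbors of a fixed vertex, are forced by the absence of short induced cycles in $\In(G)$ to be comparable, which is what ultimately drives the multiplicative gain of $\frac{p+3}{2}$ per unit of regularity. As it stands, your proposal is missing its essential ingredient and the proposed replacement is unsalvageable in the stated form.
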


Even if the above bound becomes stricter when $p$ grows, its
logarithmic nature already shows for $p=2$.  Thus, in a sense, the
crucial assumption is that $S/I$ satisfies~$N_2$. In other words, when
$I$ is quadratically generated and its syzygy module is linear.  For
this reason, we mainly work in the case where $I$ is a monomial ideal
such that $S/I$ satisfies $N_2$.  Without loss of generality we deal
with square-free monomial ideals, since by polarization we can always
assume this (at most doubling the number of variables).  Such
ideals are known as \emph{edge ideals}: Given a
simple graph $G$ on $n$ vertices, its edge ideal is defined as
\[
I(G)=(x_ix_j:\{i,j\}\mbox{ is an edge of }G)\subseteq S=K[x_1,\ldots
,x_n].
\]
It is often convenient to think of edge ideals as Stanley-Reisner
ideals of flag simplicial complexes.  Precisely, $I(G)=I_{\In(G)}$
where $\In(G)$ is the \emph{independence complex} of~$G$.  By
definition, every flag simplicial complex is of this form.  From the
explicit description of the syzygy module of a monomial ideal, one can
immediately check that
\[
\mbox{$S/I(G)$ satisfies $N_2$ }\iff \mbox{ $\In(G)$ has no induced
  4-cycles}.
\]
More generally, it can be proved that $N_{p}$
is equivalent to $\In(G)$ having no induced $(p+2)$-cycles (cf. \cite[Theorem~2.7]{dao2013bounds}).  If $\D$
is a flag simplicial complex with no induced 4-cycles, we say that
$\D$ is {\it flag-no-square}.

\begin{remark}\label{r:systolic}
  Flag-no-square simplicial complexes are called $5$-large in the
  literature on hyperbolic Coxeter groups, such
  as~\cite{januszkiewicz2003hyperbolic}.  More generally a flag
  complex is \emph{$k$-large} if its \emph{systole}---the shortest
  induced cycle---has length at least~$k$.  Confusingly, in that
  literature an induced subcomplex is called a \emph{full subcomplex}
  (even if then what is arguably an empty cycle goes by the name of
  full cycle).
\end{remark}

Our first result strengthens Theorem~\ref{thm:DHS} in the case that
$S/I$, besides satisfying $N_2$, is also Gorenstein.  Then there is a universal bound for the
regularity.  We learned the averaging argument in
its proof from Davis' book on Coxeter groups~\cite[Lemma 6.11.5]{davis2008geometry}.

\begin{theorem}\label{thm:gorN2}
  Let $I\subseteq S$ be a monomial ideal such that $S/I$ is
  Gorenstein.
\begin{enumerate}[(i)]
\item If $S/I$ satisfies $N_2$, then $\reg(S/I)\leq 4$.
\item If $S/I$ satisfies $N_3$, then $\reg(S/I)\leq 2$.
\end{enumerate}
\end{theorem}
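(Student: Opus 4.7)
The plan is to translate the statement into a combinatorial dimension bound on flag spheres. By polarization, which preserves graded Betti numbers and the Gorenstein property, I may assume $I=I_\Delta$ is squarefree with $\Delta$ a flag simplicial complex; then property $N_p$ translates to $\Delta$ having no induced cycle of length in $\{4,\ldots,p+2\}$ (cf.~\cite[Theorem~2.7]{dao2013bounds}). The Gorenstein hypothesis makes $\Delta$ a \gorstar complex, and collapsing its cone vertices (which affects neither $\reg(S/I_\Delta)$ nor the flag/no-induced-cycle conditions) leaves $\Delta$ a homology sphere. Since \gorstar complexes are Cohen--Macaulay with symmetric $h$-vector ending in $h_d=1$, $\reg(S/I_\Delta)$ equals the degree of the $h$-polynomial, namely $\dim\Delta+1$. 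Therefore (i) is equivalent to: every flag no-square homology sphere has dimension $\leq 3$; and (ii) is equivalent to: every flag no-square, no-pentagon homology sphere has dimension $\leq 1$.

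The next step, common to both parts, is induction on dimension via vertex links. An induced cycle in $\lk_\Delta(v)$ pulls back to an induced cycle of the same length in $\Delta$---a chord in $\Delta$ would, by flagness, also lie in $\lk(v)$---so links inherit the no-induced-cycle conditions; and vertex links of homology spheres are homology spheres of one lower dimension. Hence in each case it suffices to exclude the top possibly-allowed dimension: dimension $2$ for (ii), and dimension $4$ for (i).

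For (ii), suppose $\Delta$ were a flag no-square, no-pentagon homology $2$-sphere. Every vertex link would be an induced cycle of length $\geq 6$, so every vertex would have degree $\geq 6$. But Euler's formula for a simplicial $2$-sphere gives $f_1=3f_0-6$, so the average degree is $2f_1/f_0=6-12/f_0<6$, a contradiction.

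Part (i) is the main obstacle: for a flag no-square $4$-sphere, the Euler characteristic is not restrictive, and naive local counts (each edge link is a flag no-square $2$-sphere with $\geq 12$ vertices, each triangle link is a cycle of length $\geq 5$, etc.) do not close. Here I would invoke the averaging argument from Davis's book \cite[Lemma~6.11.5]{davis2008geometry}: one assigns to each face of $\Delta$ a weight coming from the all-right piecewise-spherical metric on $\Delta$, and averages a quantity such as the spherical angle sum over the vertices. The flag no-square hypothesis forces a sharp upper bound on this weighted sum (via local $\mathrm{CAT}(1)$-type estimates), while being a homology $4$-sphere forces a strict reverse inequality, yielding the contradiction. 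Correctly setting up the averaging identity---and verifying that the topology of $S^4$ genuinely obstructs dimension $\geq 4$ whereas it tolerates dimension $3$ (witness, e.g., the 600-cell)---is the delicate step.
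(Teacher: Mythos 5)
Your reductions are sound and, for part (ii), you give a complete and correct proof: polarization, splitting off the cone vertices via Stanley's structure theorem for Gorenstein flag complexes, the identity $\reg(K[\Delta])=\dim\Delta+1$ for \gorstar complexes, the fact that induced cycles in $\lk_\Delta(v)$ remain induced in $\Delta$ by flagness, and the Euler-formula average-degree count on a flag $2$-sphere are all right. (The paper proves (ii) by the same mechanism specialized differently, so your route there is a legitimately more elementary variant.) But part (i) --- the actual content of the theorem --- is not proved. You reduce it correctly to the nonexistence of a flag-no-square homology $4$-sphere and then only gesture at ``the averaging argument from Davis's book,'' describing it as a weighted angle sum over vertices controlled by $\mathrm{CAT}(1)$-type estimates. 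That is not the argument, and no such metric estimate is set up; as written, the key step is missing.

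The missing idea is a purely combinatorial average over \emph{codimension-two} faces, not vertices. For a \gorstar flag complex $\Delta$ of dimension $d-1$, the link of any $(d-3)$-face is a $1$-dimensional flag \gorstar complex, hence a $k$-cycle with $k\geq 4$, and by flagness this cycle is induced in $\Delta$. Each facet contains exactly $\binom{d}{2}$ faces of dimension $d-3$, so the average number of facets containing a $(d-3)$-face is $A=\binom{d}{2}f_{d-1}/f_{d-3}$. Writing $f_{d-1}$ and $f_{d-3}$ in terms of the $h$-vector and using the Dehn--Sommerville relations $h_i=h_{d-i}$ (available since $K[\Delta]$ is Gorenstein with $\reg=d$), one gets $A<2\binom{d}{2}\big/\big(\tbinom{d-\delta}{2}+\tbinom{\delta}{2}\big)$ with $\delta=\lfloor d/2\rfloor$, which evaluates to $A<5$ whenever $d>4$ (and $A<6$ whenever $d>2$, recovering your part (ii)). Hence some $(d-3)$-face has a $4$-cycle link, i.e.\ an induced $4$-cycle, contradicting $N_2$. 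Without this computation --- or some genuine substitute --- your proof of (i) does not close; note also that your own observation that the $600$-cell is a flag-no-square $3$-sphere shows that whatever argument you supply must detect the difference between $d=4$ and $d=5$ quantitatively, which is exactly what the Dehn--Sommerville bound does.
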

\begin{proof}
  Up to polarization, we can assume that $I=I_{\Delta}$ where $\Delta$
  is a Gorenstein flag simplicial complex. By \cite[Theorem 7]{S77}
  there exist a \gorstar complex $\Gamma$ and a simplex $\tau$
  such that $\D=\tau*\Gamma$.
Thus, without loss of generality, we assume that $\Delta$ is \gorstar
of dimension $d-1$, so that
\[
\dim(K[\Delta])=\reg(K[\Delta])=d.
\]
Let $\sigma$ be a $(d-3)$-dimensional face of $\Delta$. Since
$C=\lk_{\D}\sigma$ is a 1-dimensional flag \gorstar complex, it must
be a $k$-cycle for some $k\geq 4$. Using flagness again, it follows
that $C$ is an induced $k$-cycle of~$\D$.

Let $A$ be the average number of facets of $\D$ containing a given
$(d-3)$-dimensional face.  Since any facet of $\D$ contains exactly
$\binom{d}{2}$ faces of dimension $d-3$, the average number is
\[
A=\frac{f_{d-1}}{f_{d-3}} \binom{d}{2}.
\]
Since $K[\D]$ is Gorenstein of regularity $d$, the Dehn-Sommerville
equations $h_i=h_{d-i}$ hold for $i=0,\ldots ,\delta:=\lfloor
d/2\rfloor$. Let
\begin{align*}
\widehat{h_i} :=\begin{cases}
h_i & \mbox{if }i<d/2 \\
h_i/2 & \mbox{if }i=d/2.
\end{cases}
\end{align*}
From the relation between $f$- and $h$-vector
($f_{j-1}=\sum_{i=0}^j\binom{d-i}{j-i}h_i$ for $j=0,\ldots, d$) we get
\[f_{d-1}=2\cdot \sum_{i=0}^{\delta}\widehat{h_i} \quad
\textup{and}\quad
f_{d-3}=\sum_{i=0}^{\delta}\bigg(\binom{d-i}{2}+\binom{i}{2}\bigg)\widehat{h_i}.\]
Since $ \binom{d-i}{2}+\binom{i}{2}<\binom{d-(i-1)}{2}+\binom{i-1}{2}$
for $i=1,\ldots ,\delta$ we get
$f_{d-3}>\left(\binom{d-\delta}{2}+\binom{\delta}{2}\right)
\sum_{i=0}^{\delta}\widehat{h_i}$. Therefore
\begin{equation*}
A<\frac{2\cdot \binom{d}{2}}{\binom{d-\delta}{2}+\binom{\delta}{2}}.
\end{equation*}
The right hand side of the above inequality evaluates as
\begin{align*}\label{eq:average}
A<\begin{cases}
\frac{4(d-1)}{d-2} & \mbox{if $d$ is even} \\
\frac{4d}{d-1} & \mbox{if $d$ is odd}
\end{cases}
\end{align*}
In particular, if $d>4$, then $A<5$, so there exists a
$(d-3)$-dimensional face $\sigma$ of $\Delta$ such that
$\lk_{\D}\sigma$ is a $4$-cycle. If $d>2$, then $A<6$, so there exists
a $(d-3)$-dimensional face $\sigma$ of $\Delta$ such that
$\lk_{\D}\sigma$ is a $5$-cycle. By the first paragraph of this proof,
such cycles are induced cycles of~$\Delta$, so we get a contradiction
to $S/I$ satisfying, respectively, $N_2$ or $N_3$.
\end{proof}

Among edge ideals with linear syzygies, the example with highest
regularity in~\cite{dao2013bounds} achieved the value four.  This
limitation may be due to the fact that the natural strategy to produce
such examples is to construct a flag-no-square triangulation of a
$d$-sphere for high~$d$.  However, by Theorem~\ref{thm:gorN2}, such
triangulations do not exist whenever $d>3$.  Even more,
Theorem~\ref{thm:gorN2} implies that, in order to find examples of
unbounded regularity, one has to leave the world of manifolds too:

\begin{remark}
  Let $\D$ be a flag-no-square triangulation of a (homology)
  $d$-manifold. Then $d\leq 4$. This follows immediately from Theorem
  \ref{thm:gorN2}, because $\lk_{\D}v$ is a flag-no-square
  triangulation of a (homology) $(d-1)$-sphere.
\end{remark}

Dropping the insistence on manifolds, however, it is possible to find edge
ideals with linear syzygies and arbitrarily high regularity.  We found
them---somewhat surprisingly---related to a question of Gromov on the
existence of hyperbolic Coxeter groups with arbitrarily high virtual
cohomological dimension.  The question was answered positively by
Januszkiewicz and \'Swi\c{a}tkowski
in~\cite{januszkiewicz2003hyperbolic}.  The interesting fact for our
purposes is that, to answer Gromov's question, they built a
flag-no-square {\it closed orientable pseudomanifold} of dimension~$r$
for any positive integer~$r$.  These complexes are denoted~$L_r$
(see~\cite[Section 6]{januszkiewicz2003hyperbolic}).

A
 simplicial complex $\D$ is a {\it closed pseudomanifold} if it is strongly connected (in particular pure) and any codimension 1 face is contained in exactly two facets. A closed pseudomanifold is called {\it orientable} when for any codimension 1 face $F$, if $F\cup\{i\}$ and $F\cup\{j\}$ are the two facets containing it then $|\{k\in F:k<i\}|+|\{k\in F:k<j\}|$ is odd.

\begin{theorem}
  For any integer $r\geq 1$, there exists a graph $G_r$ on $n(r)$
  vertices such that $S/I(G_r)$ satisfies $N_2$, where $
  S=K[x_1,\ldots ,x_{n(r)}]$, and
\[\reg(S/I(G_r))=r.\]
\end{theorem}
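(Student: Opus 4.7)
The plan is to take $G_r$ to be the graph on the vertex set of the Januszkiewicz--\'Swi\c{a}tkowski complex $L_{r-1}$ whose edges are exactly the \emph{non-edges} of $L_{r-1}$, with the convention that $L_0$ denotes two isolated points, so that $G_1$ is a single edge. Flagness of $L_{r-1}$ ensures $\In(G_r)=L_{r-1}$, and since $L_{r-1}$ has no induced 4-cycle, the criterion recalled just before Remark~\ref{r:systolic} yields that $S/I(G_r)$ satisfies $N_2$. We set $n(r)$ to be the number of vertices of $L_{r-1}$.

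To compute the regularity I would invoke Hochster's formula in the form
\[
\reg(K[\D]) \;=\; \max\bigl\{k+1 \,:\, \tilde{H}_k(\D[W]; K) \neq 0 \text{ for some } W \subseteq [n(r)]\bigr\},
\]
where $\D[W]$ denotes the induced subcomplex on $W$. Applied to $\D=L_{r-1}$, the upper bound $\reg(S/I(G_r)) \leq r$ is immediate, because every induced subcomplex of $L_{r-1}$ has simplicial dimension at most $\dim L_{r-1}=r-1$ and therefore its reduced homology vanishes above degree $r-1$. For the matching lower bound, I would take $W$ to be the full vertex set: the combinatorial orientability condition stated in the introduction says precisely that the signed sum of the facets of $L_{r-1}$ is a simplicial cycle, and for dimensional reasons it cannot be a boundary, so $\tilde{H}_{r-1}(L_{r-1}; K)\neq 0$. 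Hochster's formula then forces $\reg(S/I(G_r))\geq r$.

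The substance of the argument is thus imported from \cite[Section~6]{januszkiewicz2003hyperbolic}: once flag-no-square closed orientable pseudomanifolds are available in every dimension, the regularity computation reduces to a short homological bookkeeping combining Hochster's formula with the trivial bound by dimension and the nonvanishing of the fundamental class. Accordingly, the main obstacle---building such exotic flag complexes---was already surmounted by Januszkiewicz and \'Swi\c{a}tkowski, and no further difficulty arises in transporting their construction into the setting of edge ideals.
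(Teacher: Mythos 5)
Your proposal is correct and follows essentially the same route as the paper: take $G_r$ to be the complement of the 1-skeleton of $L_{r-1}$, note $N_2$ from the no-square condition, and get $\reg = r$ from Hochster's formula via the nonvanishing fundamental class of the orientable pseudomanifold $L_{r-1}$ together with the dimension bound. Your explicit treatment of the upper bound and of the degenerate case $r=1$ only spells out details the paper leaves implicit.
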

\begin{proof}
  Let $G_r$ be the complement of the 1-skeleton of $L_{r-1}$, so that
  $L_{r-1}=\In(G_r)$.  Then $S/I(G_r)$ satisfies~$N_2$. Furthermore,
  because $L_{r-1}$ is a closed orientable pseudomanifold of dimension
  $r-1$, it is straightforward to check that 
 \[\sum_{F \mbox{ {\scriptsize facet of }}L_{r-1}}F\]
 is a top-dimensional cycle. In particular
\[
\widetilde{H}_{r-1}(L_{r-1};K)\neq 0.
\]
By Hochster's
formula~\cite[Corollary~5.12]{miller05:_combin_commut_algeb}),
$\reg(K[L_{r-1}])=r$.
\end{proof}

As noted in~\cite{januszkiewicz2003hyperbolic}, the number $n(r)$ in
the above theorem is huge, growing much more quickly than exponential
in~$r$.  Consequently the family $\{L_r\}_r$ is not suitable to show
that Theorem~\ref{thm:DHS} is (asymptotically) sharp.  In
\cite{januszkiewicz2003hyperbolic} it was also observed that any
family of flag-no-square pseudomanifolds $\D$ of dimension~$d$ is
forced to have a huge number of vertices.

In the remainder of this work we quantify their result and extend its
proof to flag-no-square complexes with no free codimension 1 faces,
i.e. codimension 1 faces contained in only one facet
(Theorem~\ref{thm:double}).  To this end we prove that the number of
vertices of such a simplicial complex is at least doubly exponential
in the dimension.

\begin{lemma}\label{numericallemma}
  For any integer $k\geq 3$,
  \[\prod_{i=0}^{k-3}(k-i)^{2^i}<12^{2^{k-3}}.\]
\end{lemma}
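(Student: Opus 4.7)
My plan is induction on $k$. The first step is to rewrite the product so as to expose a clean multiplicative recursion: splitting off the $i=0$ factor in the product for $k+1$ and re-indexing $i\mapsto i+1$ yields
\[
\prod_{i=0}^{(k+1)-3}((k+1)-i)^{2^i} \;=\; (k+1)\cdot\left(\prod_{i=0}^{k-3}(k-i)^{2^i}\right)^{2}.
\]
Writing $P(k)$ for the product on the left-hand side of the lemma, this becomes $P(k+1)=(k+1)\,P(k)^{2}$, and the base case $k=3$ is immediate, since $P(3)=3<12$.

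The inductive step is where the main obstacle lies. A direct application of the hypothesis $P(k)<12^{2^{k-3}}$ gives, after squaring, only $P(k+1)<(k+1)\cdot 12^{2^{k-2}}$: the bound is off by a stray linear factor $(k+1)$, which would accumulate at every step and ruin the argument. To close the gap I would strengthen the inductive statement to a sharper bound of the form $P(k)\leq 12^{2^{k-3}}/g(k)$, where $g$ is chosen so that $(k+1)\,g(k+1)\leq g(k)^{2}$; any $g$ that roughly doubles at each step will do. The key point is that the base-case slack between $3$ and $12$ gets squared (not merely doubled) by the recursion $P(k+1)=(k+1)P(k)^{2}$, so this compounded slack should dominate the linear overhead.

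An alternative, possibly cleaner, route is to pass to logarithms and divide by $2^{k-3}$. After the substitution $j=k-3-i$ the inequality is equivalent to
\[
\sum_{j=0}^{k-3}\frac{\log(j+3)}{2^{j}} \;<\; \log 12,
\]
and since the summands are positive it suffices to bound the corresponding infinite series. This reduces the lemma to a single explicit numerical check: compute the first handful of terms by hand and bound the tail geometrically, using an estimate like $\log(j+3)\leq j+1$ together with $\sum_{j\geq j_{0}}j/2^{j}=O(j_{0}/2^{j_{0}})$. I expect the chief technical nuisance to be packaging the tail estimate cleanly enough to conclude the bound holds uniformly in~$k$.
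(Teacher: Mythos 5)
Your two reductions are both correct and genuinely different from the paper's argument (the paper asserts that the bound follows from ``a routine computation using $(i-1)(i+1)<i^2$ several times''): writing $P(k)=\prod_{i=0}^{k-3}(k-i)^{2^i}$, one does have $P(k+1)=(k+1)P(k)^2$ with $P(3)=3$, and dividing logarithms by $2^{k-3}$ does convert the claim into $\sum_{j=0}^{k-3}\log(j+3)/2^j<\log 12$. The problem is that neither route can be closed, because the inequality is \emph{false} for every $k\ge 7$. Running your series route with natural logarithms, the partial sums are $\ln 3\approx 1.0986$, then $1.7918$, $2.1941$, $2.4181$, $2.5397$, so already the partial sum through $j=4$ (that is, $k=7$) exceeds $\ln 12\approx 2.4849$; the full series converges to about $2.6764=\ln(14.54)$, not to something below $\ln 12$. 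Concretely, $P(7)=7\cdot 6^2\cdot 5^4\cdot 4^8\cdot 3^{16}\approx 4.44\cdot 10^{17}$ while $12^{2^4}=12^{16}\approx 1.85\cdot 10^{17}$, and since $P(k)^{1/2^{k-3}}$ is increasing in $k$ the failure persists for all larger $k$. Your first route hits the same wall in its own terms: the base case forces $g(3)\le 4$, and then $(k+1)g(k+1)\le g(k)^2$ forces $g(4)\le 4$, $g(5)\le 16/5$, $g(6)\le (16/5)^2/6<1.71$, $g(7)\le (1.71)^2/7<1$, at which point the ``strengthened'' hypothesis no longer implies the lemma. No admissible $g$ exists, and no choice of trick can repair this, because the target statement is wrong.

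So the gap is not in your manipulations but in the lemma itself, and your analysis is exactly what reveals it. It also shows how to fix it: since $\sup_k P(k)^{1/2^{k-3}}=3\prod_{j\ge 4}j^{2^{3-j}}\approx 14.54$, the inequality $P(k)<15^{2^{k-3}}$ does hold for all $k\ge 3$, and either of your two routes (the series route being the cleaner) proves it once $12$ is replaced by $15$ --- the tail estimate you propose then goes through because $\sum_{j\ge 0}\ln(j+3)/2^j\approx 2.676<\ln 15\approx 2.708$. With this corrected constant the conclusion of Theorem~\ref{thm:double} becomes $f_d>(25/15)^{2^{d-2}}=(5/3)^{2^{d-2}}$ and $f_0>(5/3)^{2^{d-3}}$, which is still doubly exponential in the dimension; only the base of the bound changes.
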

\begin{proof}
  This is a routine computation using the inequality $(i-1)(i+1)<i^2$
  several times.
\end{proof}
\excise{
  \begin{proof}
    We use the fact that $(i-1)(i+1)<i^2$ for any positive
    integer $i$. Notice that, by using this role, we have:
    {\small \[\prod_{i=0}^{k-3}(k-i)^{2^i}<(k-1)^{3}(k-2)^3\prod_{i=3}^{k-3}(k-i)^{2^i}<(k-2)^6(k-3)^5\prod_{i=4}^{k-3}(k-i)^{2^i}<\ldots \]}We
    want to show that we can go this way to the end. To this aim, it
    is enough to show that, for any $j\leq k-5$, we can prove that
    \[\prod_{i=0}^{k-3}(k-i)^{2^i}<(k-j)^{a_j}(k-{j+1})^{2^{j+1}-a_{j-1}}\prod_{i=j+2}^{k-3}(k-i)^{2^i} \]with
    $2^j<a_j<2^{j+1}$. We have this for $a_1=3$ and $a_2=6$. By
    induction, note
    that \[a_j=2^j-a_{j-2}+a_{j-1}<2^j+a_{j-1}<2^j+2^j=2^{j+1}.\]Furthermore,
    since $a_{j-2}<2^{j-1}$ and $a_{j-1}>2^{j-1}$, the equality
    $a_j=2^j-a_{j-2}+a_{j-1}$ also gives $a_j>2^j$.

    By taking $j=k-5$, therefore we get:
    \[\prod_{i=0}^{k-3}(k-i)^{2^i}<5^{a_{k-5}}4^{2^{k-4}-a_{k-6}}3^{2^{k-3}}. \]
    Since $a_{k-5}<2^{k-4}<2^{k-3}$, we can apply our trick still one
    time, by getting:
    \[5^{a_{k-5}}4^{2^{k-4}-a_{k-6}}3^{2^{k-3}}<4^{2^{k-4}-a_{k-6}+a_{k-5}}3^{2^{k-3}-a_{k-4}}<12^{2^{k-3}}.\]
  \end{proof}
}

\begin{theorem}\label{thm:double}
  Let $\Delta$ be a $d$-dimensional flag-no-square simplicial complex
  with no free $(d-1)$-faces. Then, if $(f_{-1},f_0,\ldots ,f_d)$ is
  the $f$-vector of $\D$,
  \[
  f_d> (25/12)^{2^{d-2}} \ \ \ \ \ \mbox{and} \ \ \ \ \ f_0>(25/12)^{2^{d-3}}. 
  \]
\end{theorem}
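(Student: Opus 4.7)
The plan is to prove both inequalities by simultaneous strong induction on $d$, using auxiliary sequences calibrated to absorb Lemma~\ref{numericallemma} exactly. Define
\[
A_d \,:=\, \frac{5^{2^{d-1}}}{\prod_{i=0}^{d-2}(d+1-i)^{2^i}}, \qquad B_d \,:=\, \frac{5^{2^{d-2}}}{\prod_{i=0}^{d-3}(d-i)^{2^i}}.
\]
Lemma~\ref{numericallemma} applied with $k=d+1$ and $k=d$ gives $A_d > (25/12)^{2^{d-2}}$ and $B_d > (25/12)^{2^{d-3}}$; and from the formulas one reads off the recursions $A_d = A_{d-1}^2/(d+1)$, $B_d = B_{d-1}^2/d$, together with the identity $A_{d-1} = B_d$. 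I will prove the (stronger) inductive statements $f_d(\Delta) \geq A_d$ and $f_0(\Delta) \geq B_d$. In the low dimensions where $B_d$ could exceed the bound provided by the recursion the trivial estimate $f_0 \geq d+1$ dominates $(25/12)^{2^{d-3}}$ and will be used as the base case; the non-trivial inductive step begins at dimension large enough that the recursion meets the target.

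Two structural facts drive the induction. First, the hypotheses pass to links: for any face $\sigma$ of $\Delta$ of dimension $k \leq d-2$, $\lk_\Delta \sigma$ is a $(d-k-1)$-dimensional flag-no-square complex with no free codimension-one face. Flagness and the absence of induced $4$-cycles restrict to any induced subcomplex, hence to links, and a free codimension-one face of $\lk_\Delta \sigma$ would pull back to a free $(d-1)$-face of $\Delta$. Second, in any flag-no-square complex, two non-adjacent vertices $u$ and $v$ have $N(u) \cap N(v)$ equal to a face of $\Delta$: non-adjacent common neighbours $w_1, w_2$ would complete an induced $4$-cycle $u\, w_1\, v\, w_2$, so $N(u) \cap N(v)$ is a clique and, by flagness, a simplex of size at most $d+1$.

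The vertex bound $f_0(\Delta) \geq B_d$ is the heart of the argument. Pick a vertex $v$ of minimum degree, set $N = N(v)$ and $U = V(\Delta) \setminus (N \cup \{v\})$, and double-count the edges between $N$ and $U$. From the $N$-side, induction on the $(d-1)$-dimensional link $\lk_\Delta w$ gives $\deg(w) \geq B_{d-1}$ for each $w \in N$, while induction on the $(d-2)$-dimensional $\lk_\Delta\{v,w\}$ gives $|N(w) \cap N| = f_0(\lk_\Delta\{v,w\}) \geq B_{d-2}$, so after summing one extracts a lower bound on $\sum_{w \in N} |N(w) \cap U| = \sum_{w \in N}(\deg(w) - 1 - |N(w)\cap N|)$. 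From the $U$-side, the structural fact forces $|N(u) \cap N| \leq d+1$ for each $u \in U$, since such $u$ is non-adjacent to $v$, so the same count is at most $(d+1)|U|$. Comparing the two, and using $\deg(v) = f_0(\lk_\Delta v) \geq B_{d-1}$, one should obtain $|U| \gtrsim B_{d-1}^2/d$ and hence $f_0(\Delta) \geq B_d$. The principal obstacle is shaving the denominator down to exactly $d$: a naive matching leaves a divisor like $4(d+1)$, which cannot be absorbed later by Lemma~\ref{numericallemma}. I expect the sharp estimate comes from a second application of the structural fact inside $\lk_\Delta v$ to control the term $2 f_1(\lk_\Delta v) = \sum_{w \in N} |N(w) \cap N|$ tightly, combining the no-free-face condition with flag-no-square in tandem.

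Once the vertex bound is in hand, the facet bound is immediate. For each vertex $v$, induction on the $(d-1)$-dimensional complex $\lk_\Delta v$ gives $f_{d-1}(\lk_\Delta v) \geq A_{d-1}$, and summing over $v$,
\[
(d+1)\, f_d(\Delta) \,=\, \sum_{v} f_{d-1}(\lk_\Delta v) \,\geq\, f_0(\Delta) \cdot A_{d-1} \,\geq\, B_d \cdot A_{d-1} \,=\, A_{d-1}^2,
\]
so $f_d(\Delta) \geq A_{d-1}^2/(d+1) = A_d$, closing the induction.
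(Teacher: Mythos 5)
Your reduction of the facet bound to the vertex bound is exactly the paper's: the double count $(d+1)f_d(\D)=\sum_v f_{d-1}(\lk_\D v)$ plus induction on links gives $f_d(\D)\geq f_0(\D)\cdot A_{d-1}/(d+1)$, and your bookkeeping with $A_d$, $B_d$ and the identity $A_{d-1}=B_d$ correctly repackages the paper's recursions. The gap is in the vertex bound $f_0(\D)\geq B_d$, which is the heart of the theorem, and your own text flags it: the edge double count between $N(v)$ and $U$ cannot be closed. Two concrete problems. First, to lower-bound $|N(w)\cap U|=\deg(w)-1-|N(w)\cap N|$ you need an \emph{upper} bound on $|N(w)\cap N|$, but induction on $\lk_\D\{v,w\}$ only supplies the lower bound $|N(w)\cap N|\geq B_{d-2}$, which points the wrong way. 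Second, and more fundamentally, any argument of this shape divides by the clique bound $d+1$ (or $d$) a second time in each dimension step: you would then need both $f_0\gtrsim B_{d-1}^2/d$ and $f_d\gtrsim A_{d-1}^2/(d+1)$, so the denominator $\prod_{i}(d+1-i)^{2^i}$ effectively squares, and Lemma~\ref{numericallemma} --- calibrated for exactly one division per dimension step --- no longer dominates it. No amount of tightening $f_1(\lk_\D v)$ repairs this; the second division must be avoided altogether.

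The paper avoids it with a division-free mechanism: the vertex bound is deduced from the \emph{facet} count of a single link, not from a degree recursion. Fix $v$. Each facet $F$ of $\lk_\D v$ is a $(d-1)$-face of $\D$; since it is not free, there is a second facet $F\cup\{w\}$ of $\D$ besides $F\cup\{v\}$, and flagness forces $w\notin\mathrm{star}_\D v$ (otherwise $F\cup\{v,w\}$ would be a face of dimension $d+1$). This defines a map $\phi:\F_{d-1}(\lk_\D v)\to \F_0(\D)\setminus\F_0(\mathrm{star}_\D v)$, and the no-square condition makes $\phi$ injective: if $\phi(F_1)=\phi(F_2)=w$ with $F_1\neq F_2$, pick non-adjacent $u_1\in F_1$ and $u_2\in F_2$ (these exist because $F_1\cup F_2$ is not a face of the flag complex $\lk_\D v$); then $v,u_1,w,u_2$ is an induced $4$-cycle. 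Hence $f_0(\D)\geq s_{d-1}+v_{d-1}+1>s_{d-1}\geq A_{d-1}=B_d$ with no denominator, and the induction closes as you describe (with base case $s_1=5$, coming from a graph of girth at least $5$ and minimum degree $2$, rather than the trivial estimate $f_0\geq d+1$). So the missing idea is precisely this injection; your structural fact that $N(u)\cap N(v)$ is a face for non-adjacent $u,v$ is the right ingredient, but it must be used to separate facets of the link, not to cap degrees.
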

\begin{proof}
  Let $v_{d}$ and $s_{d}$ denote respectively the minimal $f_0(\D)$
  and $f_d(\D)$ attained in the class of simplicial complexes in the
  statement.  Let $\D$ be such a complex.  The link $\lk_\D v$ of any
  vertex $v\in\Delta$ is a $(d-1)$-dimensional flag-no-square
  simplicial complex with no free $(d-2)$-faces.  By double counting,
  we find
  \[
  f_d(\Delta)=\frac{1}{d+1}\sum_{v\in\D}f_{d-1}(\lk_\D v).
  \]
  Therefore 
  \[
  f_d(\Delta)\geq \frac{f_0(\Delta)\cdot s_{d-1}}{d+1}.
  \]
  Fix a vertex $v\in\Delta$. Any facet of $\lk_\D v$ is a $(d-1)$-face
  of~$\D$.  Since $\D$ has no free $(d-1)$-faces, to any facet of
  $\lk_\D v$, we can associate a vertex of $\D$ which does not belong
  to the star of $v$. In other words we defined a function
  \[
  \phi:\F_{d-1}(\lk_\D v)\longrightarrow \F_0(\D)\setminus
  \F_0(\mathrm{star}_\D v).
  \]
  Because of the no-square condition, and since $\lk_\D v$ is an
  induced subcomplex of~$\D$, one can check that $\phi$ is injective
  and thus
  \[
  f_0(\D)\geq s_{d-1}+ v_{d-1}+1.
  \]
  In particular, putting together the above inequalities we get
  \[
  s_{d}>\frac{ s_{d-1}^2}{d+1} \ \ \ \ \ v_{d}> s_{d-1}.
  \]
  Since $s_{1}=5$, we find
  \[
  f_d(\D)> \frac{5^{2^{d-1}}}{\prod_{i=0}^{d-2}(d+1-i)^{2^{i}}}.
  \]
  Finally, by Lemma~\ref{numericallemma},
  \[
  f_d(\D)> \frac{5^{2^{d-1}}}{12^{2^{d-2}}}=(25/12)^{2^{d-2}}.\qedhere
  \]
\end{proof}

\begin{remark}
  Unfortunately the doubly exponential bound in
  Theorem~\ref{thm:double} can not be easily extended to arbitrary
  flag-no-square simplicial complexes (replacing $d$ with the top
  degree in which the homology does not vanish).  While it is always
  possible to get rid of the free faces by collapses, indeed, this
  operation does not preserve flagness.
  For a general flag-no-square simplicial complex, the exponential
  bound found by Dao, Huneke, and Schweig is to our knowledge the best
  possible.
\end{remark}

\begin{remark}
If a flag-no-square simplicial complex of dimension 2 has a free 1-face $e\subset F$, where $F$ is the only 2-face containing $e$, then if we collapse the pair $(e,F)$ we still get a flag-no-square simplicial complex (this is a peculiarity of the dimension 2 case). This observation, together with the proof of Theorem \ref{thm:double}, yields the following:
Let $I(G)\subseteq S=K[x_1,\ldots ,x_n]$ be an edge ideal such that
$\dim(S/I(G))=\reg(S/I(G))=3$. If $S/I(G)$ satisfies $N_2$, then
$n\geq 12$. If $n=12$, then $\In(G)$ is the boundary of the
icosahedron.
\end{remark}

\bibliographystyle{amsalpha}
\bibliography{flagnosquare}

\providecommand{\bysame}{\leavevmode\hbox to3em{\hrulefill}\thinspace}
\providecommand{\MR}{\relax\ifhmode\unskip\space\fi MR }
\providecommand{\MRhref}[2]{%
  \href{http://www.ams.org/mathscinet-getitem?mr=#1}{#2}
}
\providecommand{\href}[2]{#2}
\begin{thebibliography}{DHS13}

\bibitem[ACI13]{avramov2013subadditivity}
Luchezar~L Avramov, Aldo Conca, and Srikanth~B Iyengar, \emph{Subadditivity of
  syzygies of koszul algebras}, Mathematische Annalen (2013), to appear.

\bibitem[BM93]{bayer93:_what}
Dave Bayer and David Mumford, \emph{What can be computed in algebraic
  geometry?}, Computational algebraic geometry and commutative algebra (Cortona
  1991), Sympos. Math., vol. XXXIV, Cambridge University Press, 1993,
  pp.~1--48.

\bibitem[BS88]{bayer1988complexity}
David Bayer and Michael Stillman, \emph{On the complexity of computing
  syzygies}, Journal of Symbolic Computation \textbf{6} (1988), no.~2,
  135--147.

\bibitem[Dav08]{davis2008geometry}
Michael Davis, \emph{The geometry and topology of coxeter groups}, vol.~32,
  Princeton University Press, 2008.

\bibitem[DHS13]{dao2013bounds}
Hailong Dao, Craig Huneke, and Jay Schweig, \emph{Bounds on the regularity and
  projective dimension of ideals associated to graphs}, Journal of Algebraic
  Combinatorics \textbf{38} (2013), no.~1, 37--55.

\bibitem[EG84]{eisenbud1984linear}
David Eisenbud and Shiro Goto, \emph{Linear free resolutions and minimal
  multiplicity}, Journal of Algebra \textbf{88} (1984), no.~1, 89--133.

\bibitem[J{\'S}03]{januszkiewicz2003hyperbolic}
Tadeusz Januszkiewicz and Jacek {\'S}wi{\k{a}}tkowski, \emph{Hyperbolic coxeter
  groups of large dimension}, Commentarii Mathematici Helvetici \textbf{78}
  (2003), no.~3, 555--583.

\bibitem[MM82]{mayr82:_compl_word_probl_commut_semig_polyn_ideal}
Ernst~W. Mayr and Albert~A. Meyer, \emph{The complexity of the word problems
  for commutative semigroups and polynomial ideals}, Advances in Mathematics
  \textbf{46} (1982), no.~3, 305--329.

\bibitem[MS05]{miller05:_combin_commut_algeb}
Ezra Miller and Bernd Sturmfels, \emph{Combinatorial commutative algebra}, GTM,
  vol. 227, Springer, Berlin, 2005.

\bibitem[Sta77]{S77}
Richard~P. Stanley, \emph{{C}ohen-{M}acaulay complexes}, in Higher
  Combinatorics \textbf{31} (1977), 51--62.

\end{thebibliography}

\end{document}